\newenvironment{proof*}{\noindent\emph{Proof}}{$\square$\smallskip}
\newtheorem{theorem}{Theorem}
\newtheorem{Definition}[theorem]{Definition}
\newtheorem{Example}[theorem]{Example}
\newtheorem{Remark}[theorem]{Remark}
\newtheorem{Exercise}[theorem]{Exercise}
\newtheorem{Exercises}[theorem]{Exercises}
\newtheorem{Notation}[theorem]{Notation}
\newtheorem{Convention}[theorem]{Convention}
\newtheorem{standing assumption}[theorem]{Standing Assumption}
\title[The planar pure braid group is a diagram group]{The planar pure braid group is a diagram group} 
\author[D.~S.~Farley]{Daniel S. Farley}
\address{Department of Mathematics\\ Miami University\\ Oxford, OH 45056 U.S.A.}
\email{farleyds@muohio.edu}
\date{\today}
\begin{document}

\begin{abstract} 
The \emph{$n$-strand planar pure braid group} is the fundamental group of $\mathbb{R}^{n} - \Delta$, where $\Delta$ consists of all $n$-tuples in which three or more coordinates are the same.   

We prove that the $n$-strand planar pure braid group is a diagram group, for all $n \geq 1$. Here, ``diagram group" refers to the class of groups studied at length by Guba and Sapir. We can deduce numerous corollaries. For instance, the $n$-strand planar pure braid group acts freely and cocompactly on a CAT(0) cubical complex. It is linear, bi-orderable, and biautomatic. The representation of the planar pure braid groups as diagram groups also offers procedures for computing group presentations and homology.  

We similarly find that ``cylindrical" versions of the planar pure braid group are annular diagram groups. This establishes some of the above corollaries for the cylindrical (or annular) planar pure braid groups. 

After initially posting a version of this preprint to the arXiv, the author learned that the main theorem (Theorem \ref{theorem:main}) had previously been observed by Anthony Genevois. We refer the reader to the acknowledgments for details.
\end{abstract}

\keywords{diagram groups, CAT(0) cubical complex, planar braid groups}
\subjclass[2020]{Primary 20F36, 20F65; Secondary 57M07}

\maketitle

\section{Introduction}

Consider the space $X_{n} = \mathbb{R}^{n} - \Delta$, where $\Delta$ is the set of all $n$-tuples of real numbers $(x_{1}, \ldots, x_{n})$, such that $x_{i} = x_{j} = x_{k}$, for some $1 \leq i < j < k \leq n$. The fundamental group $\Gamma_{n}$ of this space has been studied under several different names (for a short guide, see the introduction to \cite{MR}), and in mathematical contexts ranging from low-dimensional topology \cite{Khovanov2} to topological robotics \cite{Gonz}. We will follow \cite{MR} in calling $\Gamma_{n}$ the \emph{$n$-strand planar pure braid group}.


In this short note, we will show that $\Gamma_{n}$ is a diagram group, for all $n \geq 1$. Indeed, it is not difficult to observe, after comparing definitions, that $\Gamma_{n} \cong \mathcal{D}(\mathcal{P}_{n},w_{n})$, where $w_{n} = x_{1}x_{2}\ldots x_{n}$ and 
\[ \mathcal{P}_{n} = \langle x_{1}, x_{2}, \ldots, x_{n} \mid  x_{i}x_{j} = x_{j}x_{i} \, \, (i \neq j) \rangle. \]
This is our main theorem.
 We will briefly review the necessary definitions in the body of the paper.
 
 The author proved that each diagram group acts freely and cellularly on an explicitly defined CAT(0) cubical complex \cite{Farley1}. The quotient of the cube complex by this action is called the \emph{Squier complex} in \cite{G}. (Note that, in \cite{GS1}, it is the $2$-skeleton of the above quotient that is called the Squier complex. In this note, we use the definition from \cite{G}.)  Theorem 3.13 from \cite{Farley1} easily implies that the Squier complex is compact in the case of $\mathcal{D}(\mathcal{P}_{n},w_{n})$. 
 
Various corollaries now follow from the work of others. 
 Genevois \cite{G} proves that any compact Squier complex is special. In particular, the associated diagram group embeds into a right-angled Artin group by results of \cite{HW}, and is therefore linear \cite{HsuWise, Humphries} and bi-orderable \cite{DT}. A result of Niblo and Reeves \cite{NR1} shows that any group acting properly and cocompactly on a CAT(0) cubical complex is biautomatic. It follows directly that each $\Gamma_{n}$ is linear, bi-orderable, and biautomatic. Guba and Sapir have proved that all diagram groups are bi-orderable \cite{GS3}, which gives a second proof that the $\Gamma_{n}$ are bi-orderable. They have also computed presentations of diagram groups and the integral homology groups of diagram groups (see \cite{GS1} and \cite{GS2}, respectively).
 
 Several of the above corollaries also follow from the literature. The introduction of \cite{MR} clearly shows that, for each $n$, $\Gamma_{n}$ is a finite-index subgroup of a right-angled Coxeter group. Coxeter groups are well-known to be linear, so the linearity of $\Gamma_{n}$ follows directly. Moreover, the main result of \cite{NR2} implies that right-angled Coxeter groups act properly and cocompactly on CAT(0) cubical complexes, which shows that $\Gamma_{n}$ acts properly and cocompactly on a CAT(0) cubical complex. The latter fact implies that $\Gamma_{n}$ is biautomatic \cite{NR1}. We note also that presentations of $\Gamma_{n}$ \cite{M, MR} and integral homology groups of $\Gamma_{n}$ \cite{BW} have been computed. 
 
Thus, after comparing the above lists of corollaries, it appears that the proof that $\Gamma_{n}$ is bi-orderable is our main new result. Nevertheless, the literature on diagram groups offers a number of additional techniques that may prove useful in the further study of planar pure braid groups.

\emph{Acknowledgments}: I first learned about the planar braid groups during a stimulating talk by Jes\'{u}s Gonz\'{a}lez at the ``2nd International Workshop on Topology and Geometry" at the National University of San Marcos, Lima, Peru, in 2019. It is a pleasure to thank him, and the organizers of the conference. I would also like to thank Gabriel Drummond-Cole for encouraging me to write a short note on this topic.

[Added (9/8/2021): After I posted this preprint to the arXiv, Anthony Genevois informed me that Theorem \ref{theorem:main} occurs as part of Example 5.43 from \cite{G2} (or Example 5.40 from the arXiv version). His example also showed that the planar pure braid group is acylindrically hyperbolic. I would also like to thank Shane O Rourke for pointing out a correction.]
 
 \section{Diagram Groups}
 
 In this section, we offer a short definition of semigroup pictures and diagram groups. The reader who would like a more thorough introduction should refer to \cite{Farley2} or (especially) \cite{GS1}.
 
A \emph{semigroup presentation} $\mathcal{P}$ is a pair $\langle \Sigma \mid \mathcal{R} \rangle$, where $\Sigma$ is a finite set (or \emph{alphabet}) and $\mathcal{R}$ is a collection of equalities between positive, non-empty words in $\Sigma$. We consider three types of ingredients: 
\begin{itemize}
\item a non-empty collection of \emph{wires}, each of which is homeomorphic to $[0,1]$;
\item a possibly-empty collection of \emph{transistors}, each of which is a homeomorph of $[0,1]^{2}$. A non-empty collection of points on the top and bottom of any given transistor are \emph{contacts}; each contact is labeled by a letter from $\Sigma$. (We do not allow the corners of a transistor to be contacts.) If we read the labels of the contacts on a given transistor from left to right, we obtain non-empty positive words $w_{t}$ and $w_{b}$ in the generators (from the top and bottom of the transistor, respectively). We then say that the transistor in question is a \emph{$(w_{t},w_{b})$-transistor}. The equality $w_{t} = w_{b}$ (or $w_{b}=w_{t}$) is required to be among the equalities of $\mathcal{R}$ (note: and \emph{not} merely a consequence of the latter);
\item a \emph{frame}, which is a homeomorph of $\partial [0,1]^{2}$. Certain points on the top and bottom of the frame (but never the corners) are also called contacts and are given labels from the alphabet $\Sigma$. By reading the labels on these contacts from left to right, we obtain non-empty positive words $u$ and $v$ (from the top and bottom, respectively).  
\end{itemize}
We assemble the above in $\mathbb{R}^{2}$ subject to certain constraints. The frame is to enclose all of the wires and transistors. Each wire is to be monotonic (i.e., no horizontal line can cross the same wire twice), and no two wires or two transistors may touch. Each wire should connect a pair of contacts having the same label, and every contact is to be the endpoint of some (unique) wire. 
The resulting figure is a \emph{$(u,v)$-semigroup picture over $\mathcal{P}$}. Two such semigroup pictures are equivalent if they are isotopic by a label-matching isotopy. 

Given a $(u,v)$-semigroup picture $\Delta_{1}$ over $\mathcal{P}$ and a $(v,w)$-semigroup picture $\Delta_{2}$ over $\mathcal{P}$, we define the product $\Delta_{1} \circ \Delta_{2}$ by stacking $\Delta_{1}$ on top of $\Delta_{2}$. This product is well-defined up to isotopy and $\Delta_{1} \circ \Delta_{2}$ is a $(u,w)$-semigroup picture over $\mathcal{P}$. Additionally, it is not difficult to see that $\circ$ is associative when the relevant products exist. If we fix a positive non-empty word $u$ in the alphabet $\Sigma$, then the collection of all $(u,u)$-semigroup pictures over $\mathcal{P}$ becomes a semigroup. 
  
\begin{figure}
\centering
\begin{minipage}{.5\textwidth}
  \centering
\begin{tikzpicture}
\filldraw[black, thick]
 (.6, .96) rectangle (1,1.2); 
\filldraw[black, thick] 
(1.4,1.88) rectangle (1.8,2.12); 
\draw[black, thick]
(.4,2.8) .. controls (.4,2.4) and (.7,1.6) .. (.7,1.2) ; 
\draw[black, thick] 
(1.2,2.8) .. controls (1.2,2.4) and (1.5,2.52)  .. (1.5,2.12); 
\draw[black, thick]
(2,2.8) .. controls (2,2.4) and (1.7,2.52) .. (1.7,2.12); 
\draw[black, thick]
(1.5,1.88) .. controls (1.5,1.72) and (.9,1.36)    .. (.9,1.2); 
\draw[black, thick]
(.7,.96) .. controls (.7, .56) and (.4,.4) .. (.4,0) ; 
\draw[black, thick]
(.9,.96) .. controls (.9, .56) and (1.2,.4) .. (1.2,0); 
\draw[black, thick]
(1.7,1.88) .. controls (1.7,1.48) and (2,.4) .. (2,0); 
\draw[black, thick, dashed] (0,0) rectangle (2.4,2.8); 
\node at (.4,3) {$\mathbf{x_{1}}$}; 
\node at (1.2,3) {$\mathbf{x_{2}}$}; 
\node at (2,3) {$\mathbf{x_{3}}$}; 
\node at (.4, -.25) {$\mathbf{x_{3}}$}; 
\node at (1.2, -.25) {$\mathbf{x_{1}}$}; 
\node at (2,-.25) {$\mathbf{x_{2}}$}; 
\node at (1.45,1.3) {$\mathbf{x_{3}}$}; 
\end{tikzpicture}
  
\end{minipage}%
\begin{minipage}{.5\textwidth}
  \centering
\begin{tikzpicture}
\draw[black, thick, dashed] (4.4,1.4) ellipse (.7 and 1.4); 
\draw[black, thick] (4.75,2.612) .. controls (4.75, 2.312) and (4.5,2.32) .. (4.5,2.02); 
\draw[black, thick] (4.05,2.612) .. controls (4.05, 2.312) and (4.3,2.32) .. (4.3,2.02); 
\node at (3.85,2.712) {$\mathbf{x_{i}}$}; 
\node at (4.95,2.712) {$\mathbf{x_{j}}$}; 
\node at (3.85,.16) {$\mathbf{x_{i}}$}; 
\node at (5.075,.16) {$\mathbf{x_{j}}$}; 
\node at (6.65,2.712) {$\mathbf{x_{i}}$}; 
\node at (7.75,2.712) {$\mathbf{x_{j}}$}; 
\node at (6.65,.16) {$\mathbf{x_{i}}$}; 
\node at (7.875,.16) {$\mathbf{x_{j}}$}; 
\filldraw[black, thick] (4.2,.78) rectangle (4.6, 1.02); 
\filldraw[black, thick] (4.2,1.78) rectangle (4.6, 2.02); 
\draw[black, thick] (4.75,.188) .. controls (4.75,.488) and (4.5,.48) .. (4.5,.78); 
\draw[black, thick] (4.05,.188) .. controls (4.05,.488) and (4.3,.48) .. (4.3,.78); 
\draw[black, thick] (4.3,1.78) -- (4.3,1.02); 
\draw[black, thick] (4.5,1.78) -- (4.5,1.02); 
\draw[black, thick, ->] (5.4,1.4) -- (6.2,1.4); 
\draw[black, thick, dashed] (7.2,1.4) ellipse (.7 and 1.4); 
\draw[black, thick] (6.85,2.612) .. controls (7.1,1.4) .. (6.85,.188); 
\draw[black, thick] (7.55,2.612) .. controls (7.3,1.4) .. (7.55,.188); 
\end{tikzpicture}
\end{minipage}
\caption{On the left, we have an $(x_{1}x_{2}x_{3}, x_{3}x_{1}x_{2})$-picture over the presentation $\mathcal{P}_{3} = \langle x_{1}, x_{2}, x_{3} \mid 
x_{i}x_{j} = x_{j}x_{i} \, (i \neq j)\rangle$. The right half of the figure depicts the operation of reducing a dipole in a diagram over $\mathcal{P}_{n}$.}
\label{figure:picture}
\end{figure}
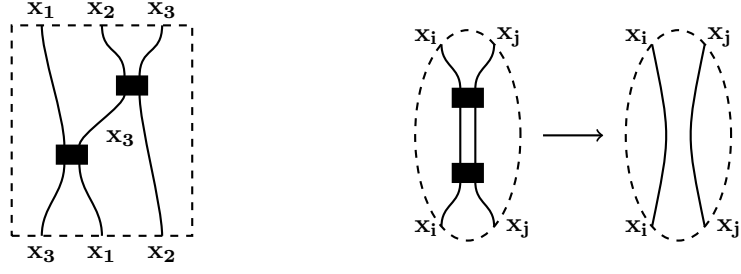

If a $(u,v)$-transistor $T_{1}$ occurs above a $(v,u)$-transistor $T_{2}$ in such a way that the wires issuing from the bottom contacts of $T_{1}$ connect bijectively and in an order-preserving fashion to the top contacts of $T_{2}$, then we say that $T_{1}$ and $T_{2}$ form a \emph{dipole}. We \emph{delete} the dipole by removing the transistors $T_{1}$ and $T_{2}$ and the wires connecting them, and then gluing the wires that were attached to the top of $T_{1}$, in order, to the wires that  were attached to the bottom of $T_{2}$. The inverse of the latter operation is called \emph{inserting} a dipole. We say that two semigroup pictures over $\mathcal{P}$ are \emph{equivalent modulo dipoles} if one can be obtained from the other by repeatedly inserting and/or deleting dipoles. ``Equivalent modulo dipoles" is easily seen to be an equivalence relation on semigroup diagrams over $\mathcal{P}$. 

The set of all $(u,u)$-semigroup diagrams over $\mathcal{P}$ modulo dipoles make up a group under the operation $\circ$, denoted $\mathcal{D}(\mathcal{P},u)$ and called the \emph{diagram group over $\mathcal{P}$ at the base word $u$}. 

\section{Planar Pure Braids}

We will use the description of planar braids that appears in \cite{MR}. 
Fix a natural number $n$. A \emph{planar braid} is a collection of $n$ smooth descending arcs in $\mathbb{R}^{2}$ connecting $n$ distinct points on one horizontal line with $n$ distinct points directly beneath them on another horizontal line. A given point in $\mathbb{R}^{2}$ is allowed to be on two of the arcs, but never on three.
 
Two planar braids $b_{1}$ and $b_{2}$ are \emph{isotopic} if there is an endpoint-preserving isotopy of the plane carrying $b_{1}$ onto $b_{2}$ such that all intermediate homeomorphisms in the isotopy also carry $b_{1}$ to planar braids. We then consider the isotopy classes of braids modulo the additional relation that allows adjacent strands of the planar braid to pass over each other, as pictured in Figure \ref{figure:ppb}. The elements of the \emph{$n$-strand planar braid group}, denoted $\overline{B}_{n}$,
are the resulting equivalence classes. The product $b_{1}b_{2}$ is defined by stacking $b_{1}$ on top of $b_{2}$.

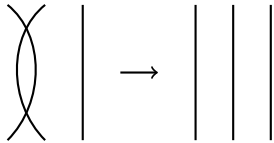
\begin{figure}[!t]
\begin{tikzpicture}
\draw[black, thick] (.5,0) .. controls (1,.5) and (1,1.4) .. (.5,1.8); 
\draw[black, thick] (1,0) .. controls (.5,.5) and (.5,1.4) .. (1,1.8); 
\draw[black, thick] (1.5,0) -- (1.5,1.8); 
\draw[black, thick, ->] (2,.9) -- (2.5,.9); 
\draw[black, thick] (3,0) -- (3,1.8); 
\draw[black, thick] (3.5,0) -- (3.5,1.8); 
\draw[black, thick] (4,0) -- (4,1.8); 
\end{tikzpicture}
\caption{We have depicted an instance of the planar braid relation, in the case of a three-strand braid.}
\label{figure:ppb}
\end{figure}

The \emph{$n$-strand planar pure braid group} $\overline{P}_{n}$ is the subgroup of $\overline{B}_{n}$ consisting of planar braids in which the initial and terminal points of each descending arc lie on a vertical line.  

\section{Proof of the main theorem}

\begin{theorem} \label{theorem:main}
Let $\mathcal{P}_{n} = \langle x_{1}, \ldots, x_{n} \mid x_{i}x_{j} = x_{j}x_{i} \, (i \neq j) \rangle$. 
The diagram group $\mathcal{D}(\mathcal{P}_{n}, x_{1}\ldots x_{n})$ is isomorphic to the $n$-strand planar pure braid group $\overline{P}_{n}$.
\end{theorem}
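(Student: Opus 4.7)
The plan is to exhibit an explicit isomorphism $\phi: \mathcal{D}(\mathcal{P}_n, w_n) \to \overline{P}_n$ by defining it on representative pictures and constructing an inverse. Given a $(w_n, w_n)$-picture $\Delta$ over $\mathcal{P}_n$, each transistor is of type $(x_i x_j, x_j x_i)$ for some $i \neq j$; smoothing each such transistor into an X-shaped crossing, and joining each incoming wire to the outgoing wire on the diagonally opposite side (which carries the same label), converts $\Delta$ into $n$ smooth descending arcs. Since each generator appears exactly once in $w_n = x_1 x_2 \ldots x_n$, the strand labeled $x_i$ necessarily connects position $i$ on the top frame to position $i$ on the bottom, so the output is a planar pure braid $\phi(\Delta)$.

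Conversely, given $\beta \in \overline{P}_n$, one can isotope $\beta$ to a general-position representative in which all crossings occur at distinct heights, each crossing involves two adjacent strands, and outside small neighborhoods of the crossings the strands are vertical. Placing a transistor at each crossing and running a wire along each maximal crossing-free sub-arc produces a $(w_n, w_n)$-picture $\psi(\beta)$ over $\mathcal{P}_n$. Then $\phi$ and $\psi$ are inverse to each other on representatives, and $\phi$ is manifestly a homomorphism because both products are defined by vertical stacking.

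It then remains to show that $\phi$ and $\psi$ respect the two equivalence relations: label-matching isotopy together with dipole reduction on the picture side, and endpoint-preserving plane isotopy together with the planar braid relation of Figure \ref{figure:ppb} on the braid side. A dipole smooths to exactly the two-crossing configuration that the planar braid relation trivializes; conversely, two consecutive crossings of adjacent strands in a general-position representative yield a dipole under $\psi$. The main technical obstacle is the general-position analysis: showing that any two general-position representatives of a given planar pure braid can be connected by a sequence of elementary moves, each of which is either a height-reordering of two crossings involving four distinct strands (corresponding to a label-matching isotopy that swaps two disjoint transistors vertically) or a planar braid move (corresponding to a dipole insertion or deletion). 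This reduces to a one-parameter transversality argument on isotopies $\beta_t$ of planar braids and is the only non-formal step in the proof.
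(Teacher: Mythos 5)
Your proposal is correct and follows essentially the same route as the paper: the crossing--transistor correspondence, the identification of dipole reduction with the planar braid relation of Figure \ref{figure:ppb}, and the observation that both products are vertical stacking. The only difference is one of care rather than substance --- the general-position/transversality step you rightly flag as the non-formal part of the well-definedness argument is passed over in the paper's proof with the single sentence ``It follows that the correspondence $\phi$ is a well-defined bijection,'' so your version is, if anything, more explicit about where the real work lies.
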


\begin{proof}
We define a map $\phi: \overline{P}_{n} \rightarrow \mathcal{D}(\mathcal{P}_{n},x_{1}\ldots x_{n})$ as follows. Given a planar pure braid $b$, we first enclose the braid in a frame in such a way that each strand connects a point on the top of the frame to a point on the bottom of the frame. The points of intersection at the top of the frame are labelled, from left to right, $x_{1}$ through $x_{n}$. The points of intersection at the bottom of the frame are labelled in the same way. We can now speak of the ``$x_{i}$-strand", for $i=1, \ldots, n$, which is the unique strand connecting the point labelled $x_{i}$ at the top, to the point labelled $x_{i}$ at the bottom. These labels are necessarily the same, since $b$ is a planar pure braid. 

We can assume, up to equivalence, that each crossing in $b$ is transverse. Let $c$ be any such crossing. Assume that the $x_{i}$-strand crosses over the $x_{j}$-strand from left to right as we move down the braid diagram. In this case, we replace $c$ with an $(x_{i}x_{j},x_{j}x_{i})$-transistor. Repeating this procedure at each crossing, we eventually arrive at an $(x_{1}\ldots x_{n}, x_{1}\ldots x_{n})$-picture over the presentation $\mathcal{P}_{n}$. Conversely, taking any $(x_{1}\ldots x_{n},x_{1}\ldots x_{n})$-picture $\Delta$, we can remove all labels and replace each transistor with a transverse crossing. The result is a planar pure braid, which maps to $\Delta$ under $\phi$. 

Note that the planar braid equivalence depicted in Figure \ref{figure:ppb} has its exact counterpart in the procedure of reducing dipoles (see the right half of Figure \ref{figure:picture}). It follows that the correspondence $\phi$ is a well-defined bijection. Finally, we note that the operation (of stacking) is identical in both groups, which makes $\phi$ an isomorphism.
\end{proof}

\begin{theorem} \label{theorem:annular} Let $\mathcal{P}_{n}$ be as defined in Theorem \ref{theorem:main}.
The annular diagram group $D_{a}(\mathcal{P}_{n}, x_{1}\ldots x_{n})$ is isomorphic to the annular planar pure braid group.
\end{theorem}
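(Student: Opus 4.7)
The plan is to mimic the proof of Theorem \ref{theorem:main} almost verbatim, replacing the rectangular frame with an annular frame. Recall that an annular $(u,u)$-picture over $\mathcal{P}$, in the sense of Guba--Sapir, is drawn on an annulus whose outer and inner boundary circles carry labeled contacts which, read cyclically in a fixed orientation, spell the word $u$; the interior contains wires and transistors subject to a radial monotonicity condition, and dipoles are defined as in the planar case. The annular planar pure braid group should be interpreted analogously: descending arcs in the cylinder $S^1 \times [0,1]$ connecting $n$ marked points on $S^1 \times \{1\}$ to $n$ marked points on $S^1 \times \{0\}$, with no triple points, considered up to isotopy and the planar braid relation of Figure \ref{figure:ppb}, with the purity condition being that the $i$th strand starts and ends at the $i$th marked point.

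The construction of the map $\phi$ from the annular pure braid group to $D_a(\mathcal{P}_n,x_1\ldots x_n)$ proceeds exactly as in Theorem \ref{theorem:main}. Enclose a given cylindrical pure braid $b$ in an annular frame and label both sets of contacts cyclically by $x_1,\ldots,x_n$ in the same orientation; purity guarantees that the strand beginning at $x_i$ on the outer circle terminates at $x_i$ on the inner circle. Put $b$ in generic position so that every crossing is transverse, and replace each crossing of the $x_i$-strand over the $x_j$-strand (in the inward radial direction) by an $(x_ix_j,x_jx_i)$-transistor. The resulting figure is an annular $(x_1\ldots x_n,x_1\ldots x_n)$-picture over $\mathcal{P}_n$, and the inverse procedure (strip labels and replace each transistor with a transverse crossing) recovers an annular planar pure braid from any such picture. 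The planar braid relation is a local move and so, just as in the proof of Theorem \ref{theorem:main}, corresponds exactly to the insertion and deletion of dipoles, as depicted on the right of Figure \ref{figure:picture}. Since the product operation in both groups is stacking annuli (equivalently, concatenating cylinders), $\phi$ is a homomorphism.

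The main obstacle I expect is bookkeeping around the cyclic nature of the annular boundary and the presence of strands (respectively, wires) that wind non-trivially around the central hole. To set up $\phi$ unambiguously one must fix a basepoint on each boundary circle from which to begin reading the contacts, and verify that the resulting diagram is independent of those choices up to the equivalence relation defining $D_a(\mathcal{P}_n,x_1\ldots x_n)$; this should be a formal consequence of the definition of annular pictures, where the base word is only well-defined up to cyclic permutation. A related point is that strands of $b$ winding around the hole produce wires winding around the hole in the associated annular picture, and one must check that the monotonicity condition on wires is compatible with the absence of triple points in $b$. Once these two compatibility checks are in place, the argument of Theorem \ref{theorem:main} carries over to show that $\phi$ is a well-defined bijective group homomorphism, hence an isomorphism.
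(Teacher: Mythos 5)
Your proposal takes essentially the same approach as the paper, which simply states that the proof is analogous to that of Theorem \ref{theorem:main} and illustrates the correspondence with an annular picture; your additional attention to fixing basepoints on the boundary circles matches the paper's remark (in the caption of Figure \ref{figure:three}) that such basepoints are needed to define the operation. Your write-up is in fact more detailed than the paper's, and the compatibility checks you flag are the right ones.
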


\begin{proof}
The proof is similar to that of Theorem \ref{theorem:main}. The reader who is interested in the precise definition of annular diagram groups can consult \cite{Farley2} and \cite{GS1}. We indicate the isomorphism in Figure \ref{figure:three}.
\end{proof}

 \begin{figure}[!t]
\centering
\begin{minipage}{.5\textwidth}
\centering
\begin{tikzpicture}
\draw[black,thick,dashed] (1.4,1.4) circle (1.4); 
\draw[black,thick,dashed] (1.4,1.4) circle (.5); 
\filldraw[black] (.9,1.4) circle (1.5pt); 
\filldraw[black] (0,1.4) circle (1.5pt); 
\filldraw[black] (1.047,1.047) circle (1pt); 
\filldraw[black] (1.753,1.047) circle (1pt); 
\filldraw[black] (1.4,.9) circle (1pt); 
\filldraw[black] (.411,.411) circle (1pt); 
\filldraw[black] (1.4,0) circle (1pt); 
\filldraw[black] (2.389,.411) circle (1pt); 
\filldraw[black] (.806,.948) -- (.948,.806) -- (.863,.721) -- (.721,.863) -- cycle; 
\filldraw[black] (1.3,.36) -- (1.5,.36) -- (1.5,.24) -- (1.3,.24) -- cycle; 
\draw[black, thick] (1.047,1.047) -- (.913,.842) [out=225,in=45]; 
\draw[black, thick] (.757,.828) -- (.411,.411) [out=225,in=45]; 
\draw[black, thick] (1.753,1.047) to [out=315,in=180] (1.95,.93) to [out=0,in=270] (2.05,1.4); 
\draw[black, thick] (2.05,1.4) arc (0:180:.65); 
\draw[black, thick] (.75,1.4) .. controls (.86,.95) .. (.842,.913) [out=270,in=45]; 
\draw[black,thick] (1.4,.9) -- (1.45,.36) [out=270,in=90]; 
\draw[black,thick] (.828,.757) .. controls (.7,.6) and (1.35,.5) .. (1.35,.36) [out=225,in=90]; 
\draw[black,thick] (1.35,.24) -- (1.4,0) [out=270,in=90]; 
\draw[black,thick] (1.45,.24) .. controls (1.6,.01) and (2,.6) .. (2.389,.411) [out=270,in=135]; 
\end{tikzpicture}
\end{minipage}%
\begin{minipage}{.5\textwidth}
\centering
\begin{tikzpicture}
\draw[black,thick,dotted] (1.4,1.4) circle (1.4); 
\draw[black,thick,dotted] (1.4,1.4) circle (.5); 
\filldraw[black] (.9,1.4) circle (1pt); 
\filldraw[black] (0,1.4) circle (1pt); 
\filldraw[black] (1.047,1.047) circle (1pt); 
\filldraw[black] (1.753,1.047) circle (1pt); 
\filldraw[black] (1.4,.9) circle (1pt); 
\filldraw[black] (.411,.411) circle (1pt); 
\filldraw[black] (1.4,0) circle (1pt); 
\filldraw[black] (2.389,.411) circle (1pt); 
\draw[black,thick] (1.047,1.047) -- (.411,.411); 
\draw[black,thick] (1.4,.9) -- (1.4,0); 
\draw[black, thick] (1.753,1.047) to [out=315,in=180] (1.95,.93) to [out=0,in=270] (2.1,1.4); 
\draw[black, thick] (2.1,1.4) arc (0:270:.8); 
\draw[black,thick] (1.3,.6) .. controls (1.4,.6) and (2,.6) .. (2.389,.411) [out=0,in=135]; 
\end{tikzpicture}
\end{minipage}
\caption{On the left, we have an annular $(x_{1}x_{2}x_{3},x_{1}x_{2}x_{3})$-picture over $\mathcal{P}_{3}$ and, on the right, its corresponding annular planar pure braid. The dots on the left sides of the inner and outer circles are basepoints that are used to define the operation.}
\label{figure:three}

\end{figure}
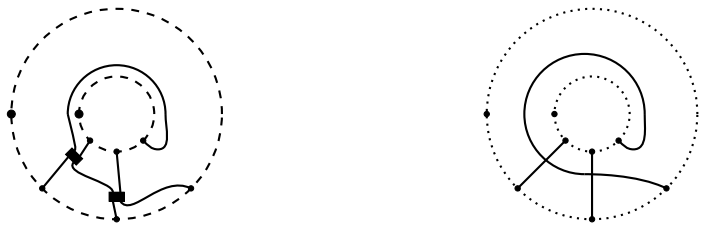

 \bibliographystyle{plain}
\bibliography{biblio}

\end{document}